\DeclareFontFamily{OT1}{eusb}{} \DeclareFontShape{OT1}{eusb}{m}{n} {<5> <6> <7> <8> <9> <10> <11> <12> <14.4> eusb10}{}
\DeclareMathAlphabet{\eusb}{OT1}{eusb}{m}{n}
\DeclareFontFamily{OT1}{eusm}{} \DeclareFontShape{OT1}{eusm}{m}{n} {<5> <6> <7> <8> <9> <10> <11> <12> <14.4> eusm10}{}
\DeclareMathAlphabet{\eusm}{OT1}{eusm}{m}{n}
\DeclareFontFamily{OT1}{eufm}{} \DeclareFontShape{OT1}{eufm}{m}{n} {<5> <6> <7> <8> <9> <10> <11> <12> <14.4> eufm10}{}
\DeclareMathAlphabet{\mathfrak}{OT1}{eufm}{m}{n}
\DeclareFontFamily{OT1}{fraktura}{}
\DeclareFontShape{OT1}{fraktura}{m}{n} {<5> <6> <7> <8> <9> <10> <11> <12> <13> <14.4> [1.1] eufm10}{}
\DeclareMathAlphabet{\fraktura}{OT1}{fraktura}{m}{n}
\DeclareFontFamily{OT1}{cmfi}{} \DeclareFontShape{OT1}{cmfi}{m}{n} {<5> <6> <7> <8> <9> <10> <11> <12> <13> <14.4> [0.9] cmfi10}{}
\DeclareMathAlphabet{\cmfi}{OT1}{cmfi}{b}{n}
\DeclareFontFamily{OT1}{cmss}{} \DeclareFontShape{OT1}{cmss}{m}{n} {<5> <6> <7> <8> <9> <10> <11> <12> <13> <14.4> cmss10}{}
\DeclareMathAlphabet{\cmss}{OT1}{cmss}{m}{n}
\newtheoremstyle{thm}{1.8ex}{1.8ex}{\itshape\rmfamily}{} {\bfseries\rmfamily}{}{2ex}{}
\newtheoremstyle{def}{1.8ex}{1.8ex}{\slshape\rmfamily}{} {\bfseries\rmfamily}{}{2ex}{}
\newtheoremstyle{rem}{1.8ex}{1.8ex}{\rmfamily}{} {\bfseries\rmfamily}{}{2ex}{}
\newenvironment{proofsect}[1] {\vspace{0.2cm}\noindent{\rmfamily\itshape#1.}}{\qed\vspace{0.15cm}}
\theoremstyle{thm}
\newtheorem{theorem}{Theorem}[section]
\newtheorem{lemma}[theorem]{Lemma}
\newtheorem*{Main Theorem}{Main Theorem.}
\newtheorem*{special theorem}{Lindeberg-Feller Theorem for Martingales}
\theoremstyle{def}
\newcommand\cour[1]{{\fontfamily{pcr}\selectfont #1}}
\theoremstyle{rem}
\newtheorem{remark}[theorem]{Remark}
\numberwithin{equation}{section}
\renewcommand{\section}{\secdef\sct\sect}
\newcommand{\sct}[2][default]{%
\refstepcounter{section}
\addcontentsline{toc}{section}{{\tocsection {}{\thesection}{\!\!\!\!#1\dotfill}}{}}
\vspace{0.7cm}
\centerline{\scshape\thesection.\ #1} \nopagebreak \vspace{0.2cm}}
\newcommand{\sect}[1]{%
\vspace{0.4cm} \centerline{\large\scshape\rmfamily #1}
\vspace{0.2cm}}
\renewcommand{\subsection}{\secdef\subsct\sbsect}
\newcommand{\subsct}[2][default]{\refstepcounter{subsection}
\addcontentsline{toc}{subsection}
{{\tocsection{\!\!}{\hspace{1.2em}\thesubsection}{\!\!\!\!#1\dotfill}}{}}
\nopagebreak\vspace{0.45\baselineskip} {\flushleft\bf
\thesubsection~\bf #1.~}
\\*[3mm]\noindent
\nopagebreak}
\newcommand{\sbsect}[1]{\vspace{0.1cm}\noindent
\textbf{#1.~}\vspace{0.1cm}}
\renewcommand{\subsubsection}{%
\secdef \subsubsect\sbsbsect}
\newcommand{\subsubsect}[2][default]{%
\refstepcounter{subsubsection} 
\addcontentsline{toc}{subsubsection}{{\tocsection{\!\!}
{\hspace{3.05em}\thesubsubsection}{\!\!\!\!#1\dotfill}}{}}
\nopagebreak
\vspace{0.15\baselineskip} \nopagebreak {\flushleft\rmfamily
\itshape\thesubsubsection
\ \rmfamily #1\/.}\ }
\newcommand{\sbsbsect}[1]{\vspace{0.1cm}\noindent
\rmfamily \itshape
\arabic{section}.\arabic{subsection}.\arabic{subsubsection} \
\sffamily #1\/.\ }
\renewcommand{\caption}[1]{%
\vglue0.5cm
\refstepcounter{figure}
\begin{minipage}{0.9\textwidth}\small {\sc Figure~\thefigure. }#1\end{minipage}}
\newcommand{\twocite}[2]{\cite{#1}--\cite{#2}}
\newcommand{\N}        {\mathbb N}
\newcommand{\R}        {\mathbb R}
\newcommand{\B}        {\mathbb B}
\newcommand{\Z}        {\mathbb Z}
\newcommand{\Q}        {\mathbb Q}
\newcommand{\AAA}         {\mathcal{A}} 
\newcommand{\CC}          {\mathcal{C}} 
\newcommand{\BB}          {\mathcal{B}}
\newcommand{\LL}         {\mathcal{L}}
\newcommand{\Prob}        {\mathbb P}
\newcommand{\twoeqref}[2]{(\ref{#1}--\ref{#2})}
\def\myffrac#1#2 in #3{\raise 2.6pt\hbox{$#3 #1$}\mkern-1.5mu\raise 0.8pt\hbox{$#3/$}\mkern-1.1mu\lower 1.5pt\hbox{$#3 #2$}}
\newcommand{\ffrac}[2]{\mathchoice%
    {\myffrac{#1}{#2} in \scriptstyle}
    {\myffrac{#1}{#2} in \scriptstyle}
    {\myffrac{#1}{#2} in \scriptscriptstyle}
    {\myffrac{#1}{#2} in \scriptscriptstyle}
}
\title[Random Walk among random conductances]{Heat-kernel estimates for random walk among random conductances with heavy tail}
\author[Omar Boukhadra
]{Omar BOUKHADRA$^*$}
\address{ CMI, 39 rue F. Joliot-Curie     13453 Marseille cedex 13, France.\newline
\indent
D\'epartement de math\'ematiques, Universit\'e de Constantine, BP 325, route Ain El Bey, 25017, Constantine, Alg\'erie.} 
\begin{document} 
\thanks{\hglue-4.5mm\fontsize{9.6}{9.6}\selectfont\copyright\,2009 by Omar Boukhadra. Provided for non-commercial research and education use. Not for reproduction, distribution or commercial use.\vspace{2mm} \\\cour{$^*$E-mail address~: omar.boukhadra@cmi.univ-mrs.fr}}
\maketitle

\vspace{-4mm}
\centerline{\textit{Centre de Math\'ematiques et Informatique (CMI),}}
\centerline{\textit{Universit\'e de Provence};}
\centerline{\textit{D\'epartement de Math\'ematiques, Universit\'e de Constantine}}

\vspace{-2mm}
\begin{abstract}
We study models of discrete-time, symmetric, $\Z^{d}$-valued random walks in
random environments, driven by a field of i.i.d. random nearest-neighbor conductances $\omega_{xy}\in[0,1]$, with polynomial tail near 0 with exponent $\gamma>0$. We first prove for all $d\geq5$ that the return probability shows an anomalous decay (non-Gaussian) that approches (up to sub-polynomial terms) a random constant times $n^{-2}$ when we push the power $\gamma$ to zero. In contrast, we prove that the heat-kernel decay is as close as we want, in a logarithmic sense, to the standard decay $n^{-d/2}$ for large values of the parameter $\gamma$. 
\newline\\
\noindent
\textit{\textbf{keywords}}~:
Random walk,  Random environments,  Markov chains,
Random conductances,  Percolation.
\newline
\noindent
\textit{\textbf{MSC}}~:
60G50; 60J10; 60K37.
\end{abstract}

\section{\textbf{Introduction and results}}
\label{}
The main purpose of this work is the derivation
of heat-kernel bounds for random walks $(X_n)_{n\in\N}$ among polynomial lower tail random conductances with exponent $\gamma>0$, on $\Z^d, d>4$. We show that the heat-kernel exhibits opposite behaviors, anomalous and standard, for small and large values of $\gamma$.

Random walks in reversible random environments are driven by the transition matrix
\begin{equation}
\label{protra}
P_{\omega}(x,y)=\frac{\omega_{xy}}{\pi_{\omega}(x)}.
\end{equation}
where $(\omega_{xy})$ is a family of random (non-negative) conductances subject to the symmetry condition~$\omega_{xy}=\omega_{yx}$. The sum $\pi_\omega(x)=\sum_y\omega_{xy}$ defines an invariant, reversible measure for the corresponding discrete-time Markov chain. 
In most situations~$\omega_{xy}$ are non-zero only for nearest neighbors on~$\Z^d$ and are sampled from a shift-invariant, ergodic or even i.i.d.\ measure~$\Q$.

One general class of results is available for such random walks under the additional assumptions of uniform ellipticity,
\begin{displaymath}
\exists\alpha>0:\quad \Q(\alpha<\omega_{b}<1/\alpha)=1
\end{displaymath}   
and the boundedness of the jump distribution,
\begin{displaymath}
\exists R<\infty:\, \vert x\vert\geq R\, \Rightarrow \, P_{\omega}(0,x)=0,\quad \Q-a.s.
\end{displaymath}
One has then the standard local-CLT like decay of the heat-kernel ($c_1,c_2$ are absolute constants), as proved by Delmotte~\cite{del}:
\begin{equation}
\label{heat-kernel}
 P^{n}_{\omega}(x,y)\leq\frac{c_{1}}{n^{d/2}}\exp\left \{-c_{2}\frac{\vert x-y\vert^{2}}{n}\right\}.
\end{equation}

Once the assumption of uniform ellipticity is relaxed, matters get more complicated. The most-intensely studied example is the simple random walk on the infinite cluster of supercritical bond percolation on~$\Z^d$, $d\ge2$. This corresponds to~$\omega_{xy}\in\{0,1\}$ i.i.d. with~$\Q(\omega_b=1)>p_c(d)$ where~$p_c(d)$ is the percolation threshold (cf. \cite{G}). Here an annealed invariance principle has been obtained by De Masi, Ferrari, Goldstein and Wick~\twocite{demas1}{demas2} in the late 1980s. More recently, Mathieu and R\'emy~\cite{Mathieu-Remy} proved the on-diagonal (i.e., $x=y$) version of the heat-kernel upper bound \eqref{heat-kernel}---a slightly weaker version of which was also obtained by Heicklen and Hoffman~\cite{Heicklen-Hoffman}---and, soon afterwards, Barlow~\cite{Barlow} proved the full upper and lower bounds on 
$P_\omega^n(x,y)$ of the form \eqref{heat-kernel}. (Both these results hold for $n$ exceeding some random time defined relative to the environment in the vicinity of~$x$ and~$y$.) Heat-kernel upper bounds were then used in the proofs of quenched invariance principles by Sidoravicius and Sznitman~\cite{Sidoravicius-Sznitman} for $d\ge4$, and for all $d\ge2$ by Berger and Biskup~\cite{BB} and Mathieu and Piatnitski~\cite{Mathieu-Piatnitski}.

We consider in our case a family of symmetric, irreducible, nearest-neighbor Markov chains on~$\Z^d$, $d\ge5$, driven by a field of i.i.d. bounded random conductances $\omega_{xy}\in[0,1]$ and subject to the symmetry condition~$\omega_{xy}=\omega_{yx}$. These are constructed as follows. Let $\Omega$ be the set of functions $\omega:\Z^d\times\Z^d \rightarrow \R_{+}$ such that $\omega_{xy}>0$ iff $x \sim y$, and  $\omega_{xy}=\omega_{yx}$ ( $x \sim y$ means that $x$ and $y$  are nearest neighbors). We call elements of $\Omega$ environments.

We choose the family $\{ \omega_{b}, b=(x,y),x \sim y, b\in\Z^d\times\Z^d\}$ i.i.d according to a law $\Q$ on $(R^{\ast}_{+})^{\Z^d}$ such that 
\begin{equation}
\label{1}
\begin{array}{ll}
\omega_{b}\leq 1 & \text{for all } b;\\
\Q(\omega_{b} \leq a)\sim a^{\gamma} & \text{when } a\downarrow 0,
\end{array}
\end{equation}
where $\gamma>0$ is a parameter. Therefore, the conductances are $\Q$-a.s. positive.

In a recent paper, Fontes and Mathieu~\cite{Fontes-Mathieu} studied continuous-time random walks on~$\Z^d$ which are defined by  generators~$\LL_\omega$ of the form
\begin{displaymath}
(\mathcal{L}_{\omega}f)(x)=\sum_{y\sim x}\omega_{xy}[f(y)-f(x)],
\end{displaymath}  
with conductances given by
\begin{displaymath}
\omega_{xy}=\omega(x)\wedge \omega(y)
\end{displaymath}
for i.i.d.\ random variables~$\omega(x)>0$ satisfying \eqref{1}. For these cases, it was found that the annealed heat-kernel, $\int \text{d}\Q(\omega) P^{\omega}_{0}(X_{t}=0)$, exhibits an \emph{anomalous decay}, for $\gamma< d/2$. Explicitly, from \cite{Fontes-Mathieu}, Theorem 4.3, we have
\begin{equation}
\label{fms}
\int \text{d}\Q(\omega) P^{\omega}_{0}(X_{t}=0)=t^{-(\gamma\wedge\frac{d}{2})+o(1)}, \quad t\rightarrow\infty.
\end{equation}

In addition, in a more recent paper, Berger, Biskup, Hoffman and Kozma \cite{berger}, provided universal upper bounds on the quenched heat-kernel by considering the nearest-neighbor simple random walk on~$\Z^d$, $d\ge2$, driven by a field of i.i.d. bounded random conductances $\omega_{xy}\in[0,1]$. The conductance law is i.i.d.\ subject to the condition that the probability of $\omega_{xy}>0$ exceeds the threshold $p_c(d)$  for bond percolation on~$\Z^d$. For environments in which the origin is connected to infinity by bonds with positive conductances, they studied the decay of the $2n$-step return probability $P_\omega^{2n}(0,0)$. They have proved that $P_\omega^{2n}(0,0)$ is bounded by a random constant times $n^{-d/2}$ in $d=2,3$, while it is $o(n^{-2})$ in~$d\ge5$ and $O(n^{-2}\log n)$ in $d=4$. More precisely, from \cite{berger}, Theorem 2.1, we have for almost every $\omega\in\{0\in \mathcal{C}_{\infty}\}$ ($\mathcal{C}_{\infty}$ represents the set of sites that have a path to infinity along bonds with positive conductances), and for all $n\geq1$. 
\begin{equation}
\label{trans}
P_\omega^n(0,0)\le C(\omega)\,
\begin{cases}
n^{-d/2},\qquad&d=2,3,
\\
n^{-2}\log n,\qquad&d=4,
\\
n^{-2},\qquad&d\ge5,\end{cases}
\end{equation}
where $C(\omega)$ is a random positive variable.\\
On the other hand, to show that those general upper bounds (cf. \eqref{trans}) in $d\geq5$ represent a real phenomenon, they  produced examples with anomalous heat-kernel decay approaching $1/n^2$, for i.i.d. laws $\Q$ on bounded nearest-neighbor conductances with \textit{lower tail much heavier than polynomial} and with~$\Q(\omega_b>0)>p_c(d)$. We quote Theorem 2.2 from \cite{berger}~:
\begin{theorem}
\label{thm2}
(1)
Let~$d\ge5$ and $\kappa>1/d$. There exists an i.i.d.\
law~$\Q$ on bounded, nearest-neighbor conductances
with~$\Q(\omega_b>0)>p_c(d)$ and a random
variable~$C=C(\omega)$ such that for almost
every~$\omega\in\{0\in\mathcal{C}_\infty\}$,
\begin{equation}
\label{lower-bd}
P_\omega^{2n}(0,0)\ge\ C(\omega)\frac{\text e^{-(\log n)^\kappa}}{n^2},
\qquad n\ge1.
\end{equation}

\noindent
(2) Let $d\ge5$. For every increasing sequence
$\{\lambda_n\}_{n=1}^\infty$, $\lambda_n\to\infty$, there exists an i.i.d.\ law $\Q$ on
bounded, nearest-neighbor conductances
with~$\Q(\omega_b>0)>p_c(d)$ and an a.s.\ positive random variable~$C=C(\omega)$ such that for almost
every~$\omega\in\{0\in\mathcal{C}_\infty\}$,
\begin{equation}
\label{2.4}
P_\omega^n(0,0)\ge \frac{C(\omega)}{\lambda_nn^2}
\end{equation}
along a subsequence that does not depend on~$\omega$.
\end{theorem}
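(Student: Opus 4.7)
The plan is to engineer an i.i.d.\ conductance law $\Q$ whose typical environments contain localized \emph{traps}---pairs of adjacent sites bonded by a strong edge but surrounded by weak edges---inside which the walk is forced to oscillate for time of order $n$. Call an unordered pair of neighbors $\{x,y\}$ a \emph{$(\delta,\eps)$-trap} if $\omega_{xy}\ge\delta$ while every other bond incident to $x$ or $y$ has conductance at most $\eps$. A short computation from~\eqref{protra} shows the escape probability per step from such a pair is $\le C\eps/\delta$, so for $\eps\asymp 1/n$ the walk bounces $x\leftrightarrow y$ for time of order $n$ and $P_\omega^{2m}(x,x)\ge c>0$ uniformly for $m\le cn$.

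For part~(1), fix $\kappa>1/d$ and design $\Q$ on $[0,1]$ with two features: $\Q(\omega_b\ge 1/2)>p_c(d)$, so the subgraph of ``strong'' bonds supercritically percolates, and a subpolynomial lower tail $\Q(\omega_b\le a)\gtrsim\exp(-c(\log 1/a)^\kappa)$ as $a\downarrow 0$, with $c$ a small constant depending on $d,\kappa$. Set $r_n:=n^{1/d}\exp((\log n)^\kappa/(2d))$. The expected number of $(1/2,1/n)$-traps in a Euclidean box of side $r_n$ around $0$ is $\asymp r_n^d\cdot\Q(\omega_b\le 1/n)^{2(2d-1)}\to\infty$, and a concentration plus Borel--Cantelli argument produces, for $\Q$-a.e.\ $\omega$ and all large $n$, a trap $\{x_n,y_n\}$ inside this box whose strong bond lies in the infinite cluster of $\{\omega_b\ge 1/2\}$-bonds; by supercriticality, $0$ itself a.s.\ lies in this cluster, with chemical distance to $x_n$ comparable to $r_n$.

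The lower bound now follows from Chapman--Kolmogorov and reversibility:
\begin{equation*}
P_\omega^{2n}(0,0)\ \ge\ \bigl(P_\omega^{r_n^2}(0,x_n)\bigr)^2\,\frac{\pi_\omega(0)}{\pi_\omega(x_n)}\,P_\omega^{2n-2r_n^2}(x_n,x_n).
\end{equation*}
The invariant-measure ratio is bounded; the middle factor exceeds $c$ by the trap estimate; and the first factor is $\gtrsim r_n^{-d}$ by Barlow's Gaussian heat-kernel bounds~\cite{Barlow} on the uniformly elliptic strong cluster, once one verifies that the walk on the full $\omega$ tracks this cluster over the $r_n^2\le n$ steps with constant probability (each weak step costs $O(1/n)$). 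Plugging in $r_n$ yields $P_\omega^{2n}(0,0)\gtrsim r_n^{-2d}=\exp(-(\log n)^\kappa)/n^2$, as required.

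For part~(2), given $\lambda_n\to\infty$, fix a deterministic subsequence $n_k\uparrow\infty$ (growing fast enough for the relevant series to converge) and construct $\Q$ supported on $\{0\}\cup\{\eps_k\}_{k\ge 1}\cup\{1\}$ with $\eps_k\asymp 1/n_k$, and atom masses $q_k=\Q(\omega=\eps_k)$ tuned so that the expected number of $(1/2,\eps_k)$-traps in a ball of radius $(\lambda_{n_k}n_k^2)^{1/(2d)}$ around $0$ exceeds one, while $\sum_k q_k$ is kept small enough to preserve supercriticality of the strong cluster. Repeating the Chapman--Kolmogorov argument at time $n_k$ yields $P_\omega^{n_k}(0,0)\ge C(\omega)/(\lambda_{n_k}n_k^2)$ along this (deterministic) subsequence. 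The principal technical obstacle in both parts is the heat-kernel lower bound $P_\omega^{r_n^2}(0,x_n)\gtrsim r_n^{-d}$: uniform ellipticity fails on the full environment, and one must either couple the walk on $\omega$ to the one restricted to the uniformly elliptic strong cluster---controlling the cost of rare excursions onto weak bonds---or argue via an effective-resistance / Dirichlet-form comparison that weak bonds contribute negligibly to the diffusion from $0$ to $x_n$ at the scale $r_n$.
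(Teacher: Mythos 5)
This theorem is \emph{not} proved in the present paper; it is quoted verbatim from Berger--Biskup--Hoffman--Kozma~\cite{berger} (their Theorem~2.2), so there is no ``paper's own proof'' to compare against. With that said, your proposal has a genuine gap that I want to flag, together with a misreading of the published proof strategy.

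The central error is in the chain
\begin{equation*}
P_\omega^{2n}(0,0)\ \ge\ \bigl(P_\omega^{r_n^2}(0,x_n)\bigr)^2\,\frac{\pi_\omega(0)}{\pi_\omega(x_n)}\,P_\omega^{2n-2r_n^2}(x_n,x_n),
\qquad P_\omega^{r_n^2}(0,x_n)\gtrsim r_n^{-d}.
\end{equation*}
By your own definition, \emph{every} bond incident to $x_n$ or $y_n$ other than $[x_n,y_n]$ has conductance $\le \eps\asymp 1/n$, so $\{x_n,y_n\}$ is an isolated two-point component of the graph of bonds with $\omega_b\ge 1/2$. It therefore \emph{cannot} ``lie in the infinite cluster of $\{\omega_b\ge 1/2\}$-bonds,'' and Barlow's heat-kernel lower bound on that cluster does not apply to the target site $x_n$. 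The walk started at $0$ on the strong cluster must cross a bond of conductance $\le 1/n$ to enter $\{x_n,y_n\}$, and at a vertex adjacent to a strong bond this single crossing already costs a multiplicative factor of order $1/n$; thus the honest estimate is $P_\omega^{r_n^2}(0,x_n)\gtrsim r_n^{-d}/n$, not $r_n^{-d}$. Plugging the corrected factor in with your choice $r_n=n^{1/d}e^{(\log n)^\kappa/(2d)}$ gives $(r_n^{-d}/n)^2\cdot O(1)\asymp e^{-(\log n)^\kappa}/n^4$, off by $n^{-2}$. The reason your displayed final numerics ``work'' is a coincidence of two compensating mistakes: you inflate $r_n$ by the extraneous factor $n^{1/d}$ (which deflates $r_n^{-d}$ by $n^{-1}$) and then you drop the genuine $1/n$ weak-bond penalty (which inflates the estimate by $n$). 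The fix is to take $r_n$ \emph{subpolynomial}, say $r_n=\exp((\log n)^\kappa/(2d))$, choose the tail constant in $\Q(\omega_b\le a)\approx\exp(-c(\log 1/a)^\kappa)$ small enough that $r_n^d\cdot\Q(\omega_b\le 1/n)^{4d-2}\to\infty$, and then keep the extra $1/n$ in the first factor; one then recovers $(r_n^{-d}/n)^2\asymp e^{-(\log n)^\kappa}/n^2$. The analogous correction is needed in part~(2), where your radius $(\lambda_{n_k}n_k^2)^{1/(2d)}$ again builds the $n^{-2}$ into $r^{-2d}$ and double-counts the weak-bond crossings.

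You should also note that \cite{berger} does \emph{not} take the heat-kernel-lower-bound route at all. Their argument forces the walk along a \emph{deterministic} nearest-neighbor path of length $\ell_n=o(\log n)$ from $0$ to a trap (costing only $e^{-O(\ell_n)}=n^{o(1)}$), pays $1/n$ to cross the weak entry bond, bounces in the trap for $\asymp n$ steps, pays another $1/n$ to exit, and walks the deterministic path back. This neatly sidesteps exactly the difficulty you correctly identify as ``the principal technical obstacle'': there is no need for a heat-kernel lower bound on a non-uniformly-elliptic environment, nor any coupling to the strong cluster. Finally, observe that the present paper's own Theorem~\ref{th} achieves an analogous lower bound for \emph{polynomial} tails by yet a third route, using the Cauchy--Schwarz bound \eqref{minun} so that the weak bond is crossed only \emph{once}, with the second factor of $1/n$ absorbed by the $\#B$ term. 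Comparing these three mechanisms is instructive, but as written your version needs the corrections above before it is a proof.
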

The distributions that they use in part~(1) of Theorem~\ref{thm2} have a tail near zero of the general form
\begin{equation}
\Q(\omega_{xy}<s) \approx |\log(s)|^{-\theta}
\end{equation}
with~$\theta>0$.

Berger, Biskup , Hoffman and Kozma \cite{berger} called attention  to the fact that the construction of an estimate of the anomalous heat-kernel decay for random walk among polynomial lower tail random conductances on $\Z^d$, seems to require subtle control of heat-kernel \emph{lower} bounds which go beyond the estimates that can be easily pulled out from the literature.
In the present paper, we give a response to this question and show that every distribution with an appropriate power-law decay near zero, can serve as such example, and that when we push
the power to zero. The lower bound obtained for the return probability approaches
(up to sub-polynomial terms) the upper bound supplied by \cite{berger} and that  for all $d\geq5$.

Here is our first main result whose proof is given in section~\ref{ahkd}~:
\begin{theorem}
\label{th}
Let $d\geq5$. There exists a positive constant $\delta(\gamma)$ depending only on $d$ and $\gamma$ such that $\Q$-a.s., there exists~$C=C(\omega)<\infty$ and  for all $n\geq1$  
\begin{equation} 
\label{min}
P^{2n}_{\omega}(0,0)\geq \frac{C}{n^{2+\delta(\gamma)}}\quad \text{and}\quad \delta(\gamma)\xrightarrow[\gamma \to 0]{}0.
\end{equation}
\end{theorem}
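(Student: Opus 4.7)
My plan is to build, for each large $n$, a single ``trap edge'' $b^*=\{x^*,y^*\}$ near the origin, and to combine reversibility with a visit-probability estimate centered on the trap.

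First I would construct the trap. Call an edge $b=\{x,y\}$ an $\epsilon$-\emph{trap} if $\omega_{b}\ge 1/2$ while each of the other $2(2d-1)$ edges incident to $x$ or $y$ has conductance at most $\epsilon$. By assumption \eqref{1}, the $\Q$-probability that a prescribed edge is an $\epsilon$-trap is at least $c\,\epsilon^{2(2d-1)\gamma}$. Setting $\epsilon_n:=1/n$ and $L_n:=n^{2(2d-1)\gamma/d+\eta}$ for an arbitrarily small fixed $\eta>0$, one has $L_n^{d}\,\epsilon_n^{2(2d-1)\gamma}=n^{d\eta}\to\infty$, and a standard second-moment / Borel--Cantelli argument furnishes, $\Q$-a.s., an $\epsilon_n$-trap edge $b^*=b^*(\omega,n)$ inside $B(0,L_n)$ for every $n$ large enough.

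Second, by reversibility,
$$P^{2n}_\omega(0,0)=\pi_\omega(0)\sum_{x}\frac{P^{n}_\omega(0,x)^2}{\pi_\omega(x)}\;\ge\;\pi_\omega(0)\,\frac{P^{n}_\omega(0,x^*)^{2}}{\pi_\omega(x^*)},$$
and since $\pi_\omega(x^*)=\omega_{b^*}+\sum_{z\neq y^*}\omega_{x^*z}\in[1/2,\,1/2+(2d-1)\epsilon_n]$ is bounded above and below by constants, the task reduces to the quenched visit-probability estimate $P^{n}_\omega(0,x^*)\gtrsim \epsilon_n/L_n^{d-2}$. The heuristic reason is that the Green function at the trap is enhanced by the long sojourn, $G_\omega(x^*,x^*)\asymp 1/\epsilon_n=n$ (the walker oscillates between $x^*$ and $y^*$ for a geometric time of mean $\asymp 1/\epsilon_n$ before escaping), so the hitting probability $\Prob_{\!0}(\tau_{x^*}<\infty)=G_\omega(0,x^*)/G_\omega(x^*,x^*)\asymp \epsilon_n/L_n^{d-2}$; and once the walker arrives it sits at $x^*$ (rather than $y^*$) at time $n$ with positive conditional probability by choosing the parity appropriately inside the sojourn. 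Combining these,
$$P^{2n}_\omega(0,0)\;\gtrsim\;\frac{\epsilon_n^{2}}{L_n^{2(d-2)}}\;=\;\frac{C}{n^{2+\delta(\gamma)}},\qquad \delta(\gamma):=\frac{4(2d-1)(d-2)\gamma}{d}+2(d-2)\eta,$$
which is strictly positive for $\gamma>0$ and tends to $0$ as $\gamma\to 0$ (with $\eta\downarrow 0$ chosen accordingly).

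The principal obstacle is a rigorous proof of the lower bound $P^{n}_\omega(0,x^*)\gtrsim\epsilon_n/L_n^{d-2}$: the Gaussian heat-kernel estimates that would immediately imply it are \emph{not} available under the polynomial-tail assumption~\eqref{1}. My approach would be to compare with the walk restricted to the ``good'' sub-environment $\{b:\omega_b\ge\alpha\}$, where $\alpha>0$ is chosen small enough that $\Q(\omega_b\ge\alpha)=1-O(\alpha^\gamma)$ exceeds the bond-percolation threshold $p_c(d)$. On the infinite good cluster, Barlow's Gaussian heat-kernel estimates apply and yield the Green's-function lower bound $G_\omega(0,y)\gtrsim 1/|y|^{d-2}$ between $0$ and a good site $y$ adjacent to the trap; a monotone-comparison/capacity argument (re-inserting the weak trap edges only increases the return to $x^*$) then transfers this estimate to the full environment, while the sojourn-enhancement factor $G_\omega(x^*,x^*)\asymp 1/\epsilon_n$ is quantified by a direct calculation inside the trap.
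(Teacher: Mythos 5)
There is a genuine gap, and you identify it yourself: the pointwise lower bound $P^n_\omega(0,x^*)\gtrsim \epsilon_n/L_n^{d-2}$ is not proved, and the outline you give for proving it is far from adequate. Specifically, (i) the assertion that ``re-inserting the weak trap edges only increases the return to $x^*$'' is not a valid monotonicity statement — adding low-conductance edges changes all of $\pi_\omega$ and the transition probabilities, and hitting probabilities are not monotone in the conductances in the direction you need; (ii) Barlow's Gaussian bounds on the good cluster hold only beyond a random, environment-dependent time $T_y(\omega)$, and the trap site $x^*$ is selected precisely because the environment is bad there, so controlling $T_y$ near $x^*$ is itself a non-trivial task; (iii) you need the estimate at the fixed time $n$, not the infinite-horizon hitting probability $G_\omega(0,x^*)/G_\omega(x^*,x^*)$, and the passage between the two requires parabolic regularity that is exactly what is unavailable under~\eqref{1}. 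The paper's whole point (and the reason the problem was posed in \cite{berger}) is that such quenched heat-kernel \emph{lower} bounds are not easy to pull out of the literature.

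The paper sidesteps this entirely by applying reversibility and Cauchy--Schwarz at the level of a whole box rather than at a single site:
\begin{equation*}
\frac{P^{2n}_\omega(0,0)}{\pi_\omega(0)}\;\ge\;\sum_{y\in B}\frac{P^n_\omega(0,y)^2}{\pi_\omega(y)}\;\ge\;\frac{P^\omega_0(X_n\in B)^2}{\pi_\omega(B)},
\end{equation*}
trading a factor $\#B$ (polynomial in $n$, giving the $\delta(\gamma)$) for the enormous simplification of only needing $P^\omega_0(X_n\in B)\gtrsim 1/n$. That box-confinement bound is then proved by a purely combinatorial/probabilistic argument: the walk hits a trap on the boundary of one of the nested boxes $B_0,\dots,B_{N-1}$ with probability $\ge 1/2$ (Lemma~2.1 plus Borel--Cantelli), crosses the weak bond at cost $\gtrsim 1/n$, and then stays on the strong bond until time~$n$ at $O(1)$ cost. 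No heat-kernel lower bound, Green-function estimate, or percolation comparison is used anywhere. Your single-site version would, if completed, even give a slightly better exponent (roughly $4(2d-1)(d-2)\gamma/d$ versus the paper's $2d(2d-1)\gamma$), but as written it replaces the paper's elementary argument with exactly the hard quantitative estimate the paper was designed to avoid.

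Two smaller points: your condition $\omega_{b^*}\ge 1/2$ is not guaranteed to have positive probability under~\eqref{1}; use instead $\omega_{b^*}\ge\xi$ for some $\xi>0$ with $\Q(\omega_b\ge\xi)>0$, as the paper does. And the statement that the walker, conditioned on hitting the trap, sits at $x^*$ at time $n$ with positive probability also requires the hitting time to be $\le n-O(1)$ with uniformly positive conditional probability, which is another estimate you would have to supply.
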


\begin{remark}
\label{rem}
\begin{enumerate}
  \item 
	The proof tells us in fact, with \eqref{trans}, that for $d\geq5$ we have almost surely 
  \begin{equation}
  \label{esup}
  \begin{split}
  & -2[1+d(2d-1)\gamma]\leq \liminf_{n} \frac{\log P^{2n}_{\omega}(0,0)}{\log n}\\
  &\qquad\qquad\qquad\qquad\qquad\qquad\leq \limsup_{n} \frac{\log P^{2n}_{\omega}(0,0)}{\log n}\leq -2.
  \end{split}
  \end{equation}
	\item As we were reminded by M. Biskup and T.M. Prescott, the invariance principle (CLT) (cf Theorem 2.1. in \cite{BP} and Theorem 1.3 in \cite{QIP}) automatically implies the ``usual'' lower bound on the heat-kernel under weaker conditions on the conductances. Indeed, the Markov property and reversibility of~$X$ yield 
	$$
	P^{\omega}_{0}(X_{2n}=0)\geq \frac{\pi_\omega(0)}{2d}\sum_{x\in \mathcal{C}_{\infty}\atop \vert x\vert\leq \sqrt{n}}P^{\omega}_{0}(X_{n}=x)^{2}.
	$$
	Cauchy-Schwarz then gives
	$$
	P^{\omega}_{0}(X_{2n}=0)\geq 	P^{\omega}_{0}(\vert X_{n}\vert \leq \sqrt{n})^{2}\frac{\pi_\omega(0)/2d}{\vert \mathcal{C}_{\infty}\cap [-\sqrt{n},+\sqrt{n}]^{d}\vert}.
	$$
	Now the invariance principle implies that $P^{\omega}_{0}(\vert X_{n}\vert \leq \sqrt{n})^{2}$ has a positive limit as~$n\to\infty$ and the Spatial Ergodic Theorem shows that $\vert \mathcal{C}_{\infty}\cap [-\sqrt{n},+\sqrt{n}]^{d}\vert$ grows proportionally to~$n^{d/2}$. Hence we get 
	$$
	P ^{\omega}_{0}(X_{2n}=0)\geq \frac{C(\omega)}{n^{d/2}}, \quad n\geq 1,
	$$
	with~$C(\omega)>0$ a.s. on the set~$\{0\in \mathcal{C}_{\infty}\}$. Note that, in $d=2,3$, this complements nicely the ``universal'' upper bounds derived in~\cite {berger}. In $d=4$, the decay is
at most $n^{-2}\log n$ and at least $n^{-2}$.
	
\end{enumerate}
\end{remark}

The result of Fontes and Mathieu \eqref{fms} (cf. \cite{Fontes-Mathieu}, Theorem 4.3) encourages us to believe that the quenched heat-kernel has a standard decay when $\gamma\geq d/2$, but the construction  seems to require subtle control of heat-kernel upper bounds. In the second result of this paper whose proof is given in section~\ref{shd}, we prove, for all $d\geq5$,  that the heat-kernel decay is as close as we want, in a logarithmic sense, to the standard decay $n^{-d/2}$ for large values of the parameter $\gamma$. For the cases where $d=2,3$, we have a standard decay of the quenched return probability under weaker conditions on the conductances (see Remark \ref{rem}).

\begin{theorem}
\label{thm}
Let $d\geq5$. There exists a positive constant $\delta(\gamma)$ depending only on $d$ and $\gamma$ such that $\Q$-a.s.,  
\begin{equation} 
\label{min}
\limsup_{n\rightarrow+\infty}\sup_{x\in\Z^d}\frac{\log P^{n}_{\omega}(0,x)}{\log n}\leq -\frac{d}{2}+\delta(\gamma)\quad \text{and}\quad \delta(\gamma)\xrightarrow[\gamma \to +\infty]{}0 .
\end{equation}
\end{theorem}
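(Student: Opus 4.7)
My plan is to derive the bound through a weighted Nash-type inequality for the Dirichlet form $\mathcal{E}_\omega(f,f)=\tfrac12\sum_{x\sim y}\omega_{xy}(f(x)-f(y))^2$, with constants that deteriorate only mildly as $\gamma$ grows large, and then to translate it into a heat-kernel bound by the standard Carlen--Kusuoka--Stroock machinery. Since $\omega_b>0$ $\Q$-a.s.\ and $\pi_\omega(x)\le 2d$ is uniformly bounded, such functional inequalities pass directly to bounds on $\sup_x P^n_\omega(0,x)$.

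First, working in a box $B_L=[-L,L]^d\cap\Z^d$ I would establish a quantitative weighted isoperimetric inequality
\begin{equation*}
\pi_\omega(\partial A)\ \ge\ c(\omega)\,L^{-\alpha(\gamma)}\,\pi_\omega(A)^{(d-1)/d}
\end{equation*}
for every $A\subset B_L$ with $\pi_\omega(A)\le\pi_\omega(B_L)/2$, valid $\Q$-a.s.\ for all $L$ large enough, with $\alpha(\gamma)\downarrow 0$ as $\gamma\to\infty$. The intuition is that by \eqref{1} the $k$-th smallest among the $\asymp L^d$ i.i.d.\ conductances in $B_L$ is of order $(k/L^d)^{1/\gamma}$, so up to a logarithmic safety factor every edge boundary of size $m$ carries weight at least of order $m\cdot L^{-d/\gamma}$; this yields $\alpha(\gamma)=O(d/\gamma)$. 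Uniformity over the choice of $A$ would be enforced by a Peierls-type enumeration of the possible boundary shapes of given perimeter together with a union bound on the empirical distribution of conductances over those shapes, followed by a Borel--Cantelli argument.

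A classical co-area / rearrangement procedure then turns this isoperimetric bound into a Nash inequality
\begin{equation*}
\|f\|_{\ell^2(\pi_\omega)}^{2+4/d}\ \le\ C(\omega)\,L^{2\alpha(\gamma)}\,\mathcal{E}_\omega(f,f)\,\|f\|_{\ell^1(\pi_\omega)}^{4/d}
\end{equation*}
for $f$ supported in $B_L$. Nash's argument gives $\sup_x P^n_\omega(x,x)\le C(\omega)\,L^{d\alpha(\gamma)}n^{-d/2}$ for $n\le L^2$, and the off-diagonal bound $\sup_x P^n_\omega(0,x)\le C(\omega)\,L^{d\alpha(\gamma)}n^{-d/2}$ follows by Cauchy--Schwarz and reversibility. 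Picking $L\asymp\sqrt n$ collapses the prefactor into $n^{d\alpha(\gamma)/2}$, and setting $\delta(\gamma):=d\alpha(\gamma)/2$ the bound $\log P^n_\omega(0,x)/\log n\le -d/2+\delta(\gamma)+o(1)$ follows since $\log C(\omega)/\log n\to 0$; by construction $\delta(\gamma)\to 0$ as $\gamma\to\infty$.

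The main obstacle is the \emph{uniform} isoperimetric inequality in the first step: typical sets are easy to handle, but the Nash inequality requires control over all subsets simultaneously, and the heavy-tailed conductances a priori allow arbitrarily small boundary weights on cleverly chosen sets. The decisive input will be the Peierls enumeration together with the power-law decay \eqref{1}, ensuring that no shape can simultaneously accumulate too many anomalously small conductances. A secondary technical point is the passage from the Dirichlet box problem to the full-lattice heat kernel, which one handles by a standard exit-time estimate derived from the same isoperimetric bound through the principal Dirichlet eigenvalue $\lambda_1(B_L)\gtrsim L^{-2-2\alpha(\gamma)}$.
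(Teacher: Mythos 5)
Your overall architecture---turn a weighted isoperimetric inequality into a functional inequality and then into an on-diagonal heat-kernel bound---is the same as the paper's, but the details diverge in two places that matter, one cosmetic and one substantive.

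The cosmetic difference is the choice of machinery (Nash/Carlen--Kusuoka--Stroock versus the paper's use of Morris--Peres evolving-set bounds, Theorem 2 of \cite{MP}) and the fact that you identify the uniform-in-$A$ isoperimetric inequality as the ``main obstacle'' requiring a Peierls-type enumeration. In fact no Peierls argument is needed here: the paper simply invokes Lemma 3.6 of \cite{Fontes-Mathieu} (reproduced as Lemma \ref{L}), which says that under \eqref{1} the minimum conductance over all bonds of $\BB_{N}$ is $\Q$-a.s.\ at least $N^{-d/\gamma-\mu}$ for $N$ large. Multiplying this worst-case weight by the unweighted isoperimetric inequality $|\partial\Lambda|\ge\kappa|\Lambda|^{(d-1)/d}$ on $\Z^d_e$ already gives a bound uniform over \emph{all} connected $\Lambda\subset\Z^d_e$, with a polynomial loss of the same order $N^{-O(d/\gamma)}$ you were hoping to reach by Peierls. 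Your worry that an adversarial set could ``accumulate too many anomalously small conductances'' is resolved in the crudest possible way: there simply are no conductances below the a.s.\ minimum. So that part of your plan is correct in spirit but needlessly elaborate, and the enumeration and empirical-distribution estimates you defer to are never carried out.

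The substantive gap is your handling of the passage from the finite box to the full lattice. You propose $L\asymp\sqrt n$ together with an exit-time estimate from $\lambda_1(B_L)\gtrsim L^{-2-2\alpha(\gamma)}$. But for $L\asymp\sqrt n$ the exit time from $B_L$ is of order $L^{2+2\alpha(\gamma)}\asymp n^{1+\alpha(\gamma)}$, which is only marginally larger than $n$; the walk escapes $B_{\sqrt n}$ before time $n$ with probability bounded away from zero, so there is a non-negligible contribution you have not controlled, and a Dirichlet heat-kernel bound on $B_L$ goes the wrong way (it lower-, not upper-, bounds the full-lattice kernel). The paper sidesteps this entirely: it sets $n=\lfloor N/2\rfloor$ so $L=N\asymp n$, modifies the environment to $\tilde\omega\equiv 1$ outside $\BB_{N+1}$, and observes that a nearest-neighbor walk started at $0$ cannot exit $B_N$ in $2n\le N$ steps, whence $P^{2n}_{\tilde\omega}(0,x)=P^{2n}_{\omega}(0,x)$ for all relevant $x$. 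The price is that the polynomial correction becomes $n^{d\alpha}$ rather than your hoped-for $n^{d\alpha/2}$, but that only changes the constant in $\delta(\gamma)=O(d^2/\gamma)$ and is irrelevant for a result stated only up to $\limsup\log/\log$. If you want to salvage the $L\asymp\sqrt n$ scaling you would need a genuine off-diagonal estimate or a Davies-type argument, which is additional work not sketched in your proposal; taking $L\asymp n$ as in the paper is both simpler and sufficient.
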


In what follows,, we refer to $P^{\omega}_{x}(\cdot)$ as the \textit{quenched} law of the random walk $X=(X_{n})_{n\geq 0}$ on $((\Z^d)^{\N}, \mathcal{G})$ with transitions given in \eqref{protra} in the environment~$\omega$, where $\mathcal{G}$ is the $\sigma-$algebra generated by cylinder functions, and let $\mathbb{P}:=\Q\otimes P^\omega_0$ be the
so-called \textit{annealed} semi-direct product measure law defined by
$$
\Prob(F\times G)=\int_F \Q(\text{d}\omega)P^\omega_0(G), \quad F\in \mathcal{F}, G\in \mathcal{G}.
$$
where $\mathcal{F}$ denote the Borel $\sigma-$algebra on $\Omega$ (which is the same as the $\sigma-$algebra generated by cylinder functions).

\section{\textbf{Anomalous heat-kernel decay}}
\label{ahkd}
In this section we provide the proof of Theorem \ref{th}.

We consider a family of bounded nearest-neighbor conductances~$(\omega_b)\in\Omega=[0,1]^{\B^d}$ where~$b$ ranges over the set~$\B^d$ of unordered pairs of nearest neighbors in~$\Z^d$. The law $\Q$ of the~$\omega$'s will be i.i.d.\ subject to the conditions given in \eqref{1}.\\

We prove this lower bound by following a different approach of the one ado\-pted by Berger, Biskup , Hoffman and Kozma \cite{berger} to prove \twoeqref{lower-bd}{2.4}. In fact, they prove that in a box of side length~$\ell_n$ there
exists a configuration where a strong bond with conductance of order 1, is separated from other sites by bonds of strength~$1/n$, and (at least) one of these ``weak'' bonds is connected to the origin by a ``strong'' path not leaving the box. Then the probability that the walk is back to the origin at time~$n$ is bounded below by the probability that the walk goes directly
towards the above pattern (this costs $ e^{O(\ell_n)}$ of
probability) then crosses the weak bond (which costs~$1/n$),
spends time $n-2\ell_n$ on the strong bond (which costs only $O(1)$
of probability), then crosses a weak bond again (another factor
of~$1/n$) and then heads towards the origin to get there on
time (another $ e^{O(\ell_n)}$ term). The cost of this
strategy is $O(1) e^{O(\ell_n)}n^{-2}$ so if $\ell_n=o(\log
n)$ then we get leading order~$n^{-2}$.\\

Our method for proving Theorem \ref{th} is, in fact, simple - we note that due to the reversibility of
the walk and with a good use of Cauchy-Schwartz, one does not need to condition
on the exact path of the walk, but rather show that the walker has a relatively large
probability of staying within a small box around the origin. Our  objective will consist in showing that for almost every~$\omega$, the probability that the random walk when started at the origin is at time~$n$ inside the box~$B_{n^{\delta}}=[-3n^{\delta},3n^{\delta}]^{d}$, is greater than~$c/n$ (where $c$ is a constant and  $\delta=\delta(\gamma)\downarrow 0$). Hence we will get $P^{2n}_{\omega}(0,0)/\pi(0) \geq c/n^{2+\delta d}$ by virtue of  the following inequality which, for almost every environment~$\omega$, derives from the reversibility of $X$, Cauchy-Schwarz inequality and  \eqref{1}~:

\begin{eqnarray}
\label{minun}
\frac{P^{2n}_{\omega}(0,0) }{\pi_\omega(0)}
&\geq& 
\sum_{y\in B_{n^{\delta}}}\frac{P^{n}_{\omega}(0,y)^{2}}{\pi_\omega(y)} 
\nonumber\\ 
&\geq&
\left(\sum_{y\in B_{n^{\delta}}}P^{n}_{\omega}(0,y)\right)^{2} \frac{1}{\pi_\omega(B_{n^{\delta}})}
\nonumber\\
&\geq&
\frac{P^{\omega}_{0}(X_{n}\in B_{n^{\delta}})^{2}}{\# B_{n^{\delta}}}.
\end{eqnarray}
In order to do this, our strategy  is to show that the random walk meets a \textit{trap}, with positive probability, before getting out from $[-3n^{\delta},3n^{\delta}]^{d}$, where, by
definition, a trap is an edge of conductance of order $1$ that can be
reached only by crossing an edge of order $1/n$. The random walk, being imprisoned in the trap inside the box $[-3n^{\delta},3n^{\delta}]^{d}$,  will not get out from this box before time~$n$ with positive probability. Then the Markov property yields $P^{\omega}_{0}(X_n\in [-3n^{\delta},3n^{\delta}]^{d})\geq c/n$. Thus, we will be brought to  follow the walk until it finds a specific configuration in the environment.

First, we will need to prove one lemma. Let $B_{N}=[-3N,3N]^{d}$ be the box centered at the origin and of radius $3N$ and define $\partial B_{N}$ to be its inner boundary, that is, the set of vertices in $B_N$ which are adjacent to some vertex not in $B_N$. We have $\#B_N\leq (7N)^{d}$. Let~$H_{0}=0$ and  define $H_{N}$, $N\geq1$, to be the hitting time of $\partial B_{N}$, i.e.
\begin{displaymath}
H_{N}=\inf \{n\geq0:X_{n}\in \partial B_{N}\}.
\end{displaymath} 
The box~$B_{N}$ being finite for $N$ fixed, we have  then $H_{N}<\infty$ a.s., \mbox{$\forall N\geq 1$.} 

Let $\hat{e}_{i}, \, i=1,\ldots, d$, denote the canonical unit vectors in $\Z^{d}$, and let $x\in \Z^{d}$, with $x:=(x_{1},\ldots,x_{d})$. Define $i_{0}:=\max\{i:\vert x_{i}\vert\geq\vert x_{j}\vert, \forall j\neq i\}$ and let $\epsilon (x): \Z^{d}\rightarrow \{-1,1\}$ be the function such that
\begin{displaymath}
\epsilon (x)= \begin{cases}
+1 & \text{if } x_{i_{0}}\geq 0 \\
-1 & \text{if } x_{i_{0}}<0 \end{cases} 
\end{displaymath}
Now, let $\alpha, \xi$ be positive constants such that $\Q(\omega_{b}\geq\xi)>0$. Define $\AAA_{N}(x)$ to be the event that the configuration near $x, y=x+\epsilon(x)\hat{e}_{i_{0}}$ and  $z=x+2\epsilon(x)\hat{e}_{i_{0}}$ is as follows:
\begin{enumerate}
	\item $\frac{1}{2} N^{-\alpha}<	\omega_{xy}\leq  N^{-\alpha}$.
	\item $\omega_{yz}\geq\xi$.
	\item every other bond emanating out of $y$ or $z$ has $\omega_{b}\leq N^{-\alpha}$. 
\end{enumerate}
The event~$\AAA_{N}(x)$ so constructed involves a collection of $4d-1$ bonds that will be denoted by $\CC(x)$, i.e.
\begin{eqnarray*}
\begin{split}
& \CC(x):=\{[x,y],[y,z],[y,y^i],[z,z^i],[z,z^i_0]; y=x+\epsilon(x)\hat{e}_{i_{0}},z=x+2\epsilon(x)\hat{e}_{i_{0}},\\
& \qquad \qquad \qquad \qquad\qquad \qquad y^i=y\pm\hat{e}_{i}, z^i=z\pm\hat{e}_{i},\forall i\neq i_0,z^i_{0}=z+\epsilon(x)\hat{e}_{i_0}  \}
\end{split}
\end{eqnarray*}

Let us note that if $x\in \partial B_N$, for some $N\geq 1$, the collection $\CC(x)$ is outside the box~$B_N$ and if $y\in \partial B_K$, for $K\neq N$, we have $\CC(x)\cap \CC(y)=\emptyset$. \\
If the bonds of the collection $\CC(x)$ satisfy the conditions of the event $\AAA_{N}(x)$, we agree to call it a \textit{trap} that we will denote by $\mathfrak{P}_{N}$.

The lemma says then that~:
\begin{lemma}
\label{I}
The family $\{\AAA^{k}_{N}=\AAA_{N}(X_{H_{k}})\}^{N-1}_{k=0}$ is $\mathbb{P}$-independent for each $N$.
\end{lemma}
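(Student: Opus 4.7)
My plan is to reveal the walk and the environment in stages and exploit the tower property. The central measurability observation I would establish first is that up to (and including) time $H_k$ the walk is confined to $B_k\setminus\partial B_k$: by definition of $H_k$ the walk cannot be on $\partial B_k$ strictly earlier, nor can it leave $B_k$ without first crossing $\partial B_k$. Consequently every transition on the segment $[0,H_k]$ is issued from a vertex $v\in B_k\setminus\partial B_k$ all of whose neighbours lie in $B_k$, so the conditional distribution of $(X_0,\ldots,X_{H_k})$ under $P^\omega_0$ depends on $\omega$ only through its restriction to the set $\mathcal{I}_k$ of bonds with both endpoints in $B_k$.

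Combined with the two structural facts already stressed in the text --- that $\CC(X_{H_k})$ lies outside $B_k$, hence is disjoint from $\mathcal{I}_k$, and that $\CC(X_{H_j})\cap\CC(X_{H_k})=\emptyset$ whenever $j\neq k$ --- this gives the desired independence by induction on $k$. Let $\mathcal{G}_k$ denote the $\sigma$-algebra generated by $(X_0,\ldots,X_{H_k})$ together with the conductances on $\mathcal{I}_k\cup\bigcup_{j\le k}\CC(X_{H_j})$, so that the events $\AAA_N^0,\ldots,\AAA_N^k$ are $\mathcal{G}_k$-measurable. Passing from $\mathcal{G}_k$ to $\mathcal{G}_{k+1}$ requires revealing the walk from $H_k$ to $H_{k+1}$, and by the measurability observation this uses only conductances on $\mathcal{I}_{k+1}$, which in turn are disjoint from $\CC(X_{H_{k+1}})$. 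By the i.i.d.\ property of $\Q$ the conductances on $\CC(X_{H_{k+1}})$ therefore remain i.i.d.\ with law $\Q$ conditionally on $\mathcal{G}_k$ and on the value of $X_{H_{k+1}}$. Since $\AAA_N(x)$ is the same conjunction of $4d-1$ independent conditions on the bonds of $\CC(x)$ for every $x$, the quantity $p_N:=\Q(\AAA_N(x))$ does not depend on $x$, and averaging over $X_{H_{k+1}}$ yields
\begin{equation*}
\mathbb{P}\bigl(\AAA_N^{k+1}\bigm|\mathcal{G}_k\bigr)\;=\;\sum_{x\in\partial B_{k+1}}\mathbb{P}\bigl(X_{H_{k+1}}=x\bigm|\mathcal{G}_k\bigr)\,\Q(\AAA_N(x))\;=\;p_N.
\end{equation*}
Iterating the tower property gives $\mathbb{P}\bigl(\bigcap_{k\in S}\AAA_N^k\bigr)=p_N^{|S|}=\prod_{k\in S}\mathbb{P}(\AAA_N^k)$ for every $S\subset\{0,\ldots,N-1\}$, which is the claimed $\mathbb{P}$-independence.

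The main technical point where care is required is precisely the measurability claim in the first step. One has in particular to verify that the transition that delivers $X_{H_k}\in\partial B_k$ does not involve any outward-pointing bond: this works because $X_{H_k-1}$ lies in the strict interior $B_k\setminus\partial B_k$, so the normalising sum $\pi_\omega(X_{H_k-1})$ runs only over bonds in $\mathcal{I}_k$, and the outgoing bonds at $X_{H_k}$ itself --- some of which belong to $\CC(X_{H_k})$ --- are not queried by the walk before time $H_k+1$ and hence do not enter the determination of $X_{H_k}$.
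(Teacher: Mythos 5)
Your proof is correct and takes essentially the same approach as the paper: both exploit that the walk's path up to each hitting time $H_k$ only probes conductances on bonds interior to $B_k$, that each $\CC(X_{H_k})$ lies outside $B_k$ and the collections are pairwise disjoint, and that by the i.i.d.\ structure of $\Q$ the trap-configuration probability is therefore constant and decoupled from the past. Your filtration/tower-property formulation is slightly more systematic than the paper's, which verifies independence of two events explicitly and then appeals to ``the same reasoning with some adaptations'' for the general case; your induction handles all $k$ uniformly, and your final paragraph supplies the measurability detail (that $X_{H_k-1}$ lies in the strict interior, so the arriving transition never queries bonds of $\CC(X_{H_k})$) which the paper leaves implicit. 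One cosmetic imprecision: your opening clause ``up to (and including) time $H_k$ the walk is confined to $B_k\setminus\partial B_k$'' is not literally true since $X_{H_k}\in\partial B_k$; the correct statement, which you in fact use, is that every \emph{transition} up to time $H_k$ is issued from a vertex in $B_k\setminus\partial B_k$.
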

\begin{proof}
The occurrence of the event $\AAA_{N}(X_{H_{k}})$ means that the random walk $X$ has met a trap~$\mathfrak{P}_{N}$ situated outside of the box~$B_{k}$ when it has hit for the first time the boundary of the box~$B_{k}$.

Let $q_{N}$ be the $\Q$-probability of having the configuration of the trap $\mathfrak{P}_{N}$. We have $q_{N}=\Q(\AAA_{N}(x))=\mathbb{P}[\AAA_{N}(X_{H_{k}})],\, \forall x\in \partial B_{k}$ and $\forall k\leq N-1$. Indeed, by virtue of the i.i.d. character of the conductances and the Markov property, when the random walk hits the boundary of $B_{k}$ for the first time at some element~$x$, the probability that the collection $\CC(x)$ constitutes a trap, i.e., satisfies the conditions of the event $\AAA_N(x)$, depends only on the edges of the collection $\CC(x)$, which have not been visited before. \\ 
Let $k_{1}< k_{2}\leq N-1$ and $x\in \partial B_{k_{2}}$, we have then
\begin{eqnarray*}
\Prob\left[\AAA^{k_{1}}_{N}, X_{H_{k_{2}}}=x,\AAA^{k_{2}}_{N}\right]
&=&
\Prob\left[\left\{\AAA^{k_{1}}_{N}, X_{H_{k_{2}}}=x\right\}\cap\AAA_N(x)\right]\\
&=&
\Prob\left[\AAA^{k_{1}}_{N}, X_{H_{k_{2}}}=x\right]\Prob\left[\AAA_N(x)\right]\\
&=&
q_N\Prob\left[\AAA^{k_{1}}_{N}, X_{H_{k_{2}}}=x\right],
\end{eqnarray*}
since the events $\{\AAA^{k_{1}}_{N}, X_{H_{k_{2}}}=x\}$ and $\AAA_{N}(x)$ depend respectively on the conductances of the bonds of $B_{k_{2}}$ and the conductances of the bonds of the collection $\CC(x)$ which is situated outside the box $B_{k_{2}}$ when $x\in \partial B_{k_{2}}$.

Thus
\begin{eqnarray*}
\mathbb{P}\left[\AAA^{k_{1}}_{N}\AAA^{k_{2}}_{N}\right]
&=&
\sum_{x\in \partial B_{k_{2}}}\Prob\left[\AAA^{k_{1}}_{N}, X_{H_{k_{2}}}=x,\AAA^{k_{2}}_{N}\right]
\\
&=&
q_{N}\sum_{x\in \partial B_{k_{2}}}\Prob\left[\AAA^{k_{1}}_{N}, X_{H_{k_{2}}}=x\right]
\\
&=&
q_{N}\mathbb{P}\left[\AAA^{k_{1}}_{N}\right]=q^{2}_{N}.
\end{eqnarray*}

With some adaptations, this reasoning remains true in the case of more than two events $\AAA^{k}_{N}$.
\end{proof}

We come now to the proof of Theorem~\ref{th}.\\

\begin{proofsect}{Proof of Theorem~\ref{th}}
Let $d\geq5$ and $\gamma>0$. Set $\alpha=\frac{1-\epsilon}{(4d-2)\gamma}$ for arbitrary positive constant $\epsilon<1$ (the constant $\alpha$ is the same used in the definition of the event $\AAA_ N(x)$). As seen before (cf. \eqref{minun}), for almost every environment~$\omega$,  the reversibility of $X$, Cauchy-Schwarz inequality and  \eqref{1} give

\begin{equation}
\label{minun2}
\frac{P^{2n}_{\omega}(0,0) }{\pi_\omega(0)}
\geq 
\frac{P^{\omega}_{0}(X_{n}\in B_{n^{1/\alpha}})^{2}}{\# B_{n^{1/\alpha}}},
\end{equation}

By the assumption \eqref{1} on the conductances and the definition of the event $\AAA_N	(x)$, the probability of having the configuration of the trap $\mathfrak{P}_{N}$ is greater than $cN^{-(1-\epsilon)}$ (where $c$ is a constant that we use henceforth as a generic constant). Indeed, when~$N$ is large enough, we have
\begin{eqnarray*}
q_{N}
&=&
 \Q\left(\frac{1}{2} N^{-\alpha}<	\omega_{xy}\leq  N^{-\alpha}\right)
 \Q(\omega_{yz}\geq\xi)
 \left[\Q(\omega_{b}\leq N^{-\alpha})\right]^{4d-3} 
\geq
\frac{c}{N^{1-\epsilon}}.
\end{eqnarray*}

Consider now the following event
$$
\Lambda_{N}:=\bigcup^{N-1}_{k=0}\AAA^{k}_{N}.
$$
The event~$\Lambda_{N}$ so defined may be interpreted as follows~: \textit{at least, one among the $N$ disjoint collections $\CC(X_{H_{k}}),\, k\leq N-1$, constitutes a trap $\mathfrak{P}_{N}$}. 
The events $\AAA^{k}_{N}$ being independent by lemma \ref{I}, we have
\begin{eqnarray}
\label{7} 
\mathbb{P}[\Lambda^{c}_{N}]
&\leq&
\left(1-cN^{\epsilon-1}\right)^{N} \nonumber
\\
&\leq&
\exp\left\{N\log\left(1-cN^{\epsilon-1}\right)\right\}\nonumber
\\
&\leq&
\exp\left\{-cN^{\epsilon}\right\}.
\end{eqnarray}
 Chebychev inequality and \eqref{7} then give
\begin{equation}
\label{cantelli}
\sum^{\infty}_{N=1}\Q\left\{\omega: P^{\omega}_{0}(\Lambda^{c}_{N})\geq 1/2\right\}
\leq 
2\sum^{\infty}_{N=1}\mathbb{P}[\Lambda^{c}_{N}]<+\infty.
\end{equation}
It results by Borel-Cantelli lemma that for almost every $\omega$, there exists $N_{0}\geq1$ such that for each $N\geq N_{0}$, the event $\AAA_{N}(x)$ occurs inside the box $B_{N}$ with positive probability (greater than~$1/2$) on the path of $X$, for some $x\in B_{N-1}$. For almost every~$\omega$, one may say that $X$ meets with positive probability a trap $\mathfrak{P}_{N}$ at some site $x\in B_{N-1}$ before getting outside of $B_{N}$.

Suppose that~$N\ge N_0$ and let~$n$ be such that~$N^{\alpha}\leq n<(N+1)^{\alpha}$. Define
$$
D_{N}:= \left\{
\begin{array}{ll}
\inf\{k\leq N-1: \AAA^{k}_{N}\,\text{occurs}\} & \text{if} \quad \Lambda_{N}\,\text{occurs}\\
+\infty & \text{otherwise},
\end{array}
\right.
$$ 
to be the rank of the first among the~$N$ collections $\CC(X_{H_{k}}),\, k\leq N-1$, that constitutes a trap $\mathfrak{P}_{N}$. If $D_{N}=k$, the random variable~$D_{N}$ so defined depends only on the steps of $X$ up to time~$H_{k}$. Thus, if $D_{N}=k$, we have  $X_{H_{k}}\in B_{N-1}$ and $\CC(X_{H_{k}})$ constitutes a trap $\mathfrak{P}_{N}$.
So, if we set $X_{H_{k}}=x$, the bond~$[x,y]$ (of the trap $\mathfrak{P}_{N}$) will have then a conductance of order $N^{-\alpha}$. In this case, the probability for the random walk, when started at~$X_{H_{k}}=x$, to cross the bond $[x,y]$ is by the property (1) of the definition of the event~$\AAA_N(x)$ above greater than
\begin{equation}
\label{b1}
\frac{(1/2)N^{-\alpha}}{\pi_{\omega}(x)}\geq \frac{1/2}{2dN^{\alpha}}= \frac{1}{4dN^{\alpha}}.
\end{equation} 
Here we use the fact that $\pi_{\omega}(x)\leq 2d$ by virtue of \eqref{1}.
This implies by the Markov property and by \eqref{b1} that
\begin{equation}
\label{if}
\begin{split}
& P^{\omega}_{0}(X_{n}\in B_{N}|D_{N}\leq N-1)\\ 
& \qquad =\sum^{N-1}_{k=0}\sum_{x\in B_{k}}\frac{P^{\omega}_{0}(X_{n}\in B_{N},D_{N}=k, X_{H_{k}}=x)}{P^{\omega}_{0}(D_{N}\leq N-1)}
\\
&\qquad\geq  
\sum^{N-1}_{k=0}\sum_{x\in B_{k}}\frac{P^{\omega}_{0}(H_{N}\geq n, D_{N}=k, X_{H_{k}}=x)}{P^{\omega}_{0}(D_{N}\leq N-1)}
\\
& \qquad \geq 
\sum^{N-1}_{k=0}\sum_{x\in B_{k}}\frac{P^{\omega}_{0}(D_{N}=k, X_{H_{k}}=x)}{P^{\omega}_{0}(D_{N}\leq N-1)}
P^{\omega}_{x}(H_{N}\geq n)\\
& \qquad \geq 
\sum^{N-1}_{k=0}\sum_{x\in B_{k}}\frac{P^{\omega}_{0}(D_{N}=k, X_{H_{k}}=x)}{P^{\omega}_{0}(D_{N}\leq N-1)} P^{\omega}_{y}(H_{N}\geq n)P^{\omega}_{x}(X_{1}=y)\\
&\qquad \geq 
\frac{1}{4dN^{a}}\sum^{N-1}_{k=0}\sum_{x\in B_{k}}\frac{P^{\omega}_{0}(D_{N}=k, X_{H_{k}}=x)}{P^{\omega}_{0}(D_{N}\leq N-1)}P^{\omega}_{y}(H_{N}\geq n) \\
& \qquad \geq
\frac{1}{4dn}\sum^{N-1}_{k=0}\sum_{x\in B_{k}}\frac{P^{\omega}_{0}(D_{N}=k, X_{H_{k}}=x)}{P^{\omega}_{0}(D_{N}\leq
N-1)}P^{\omega}_{y}(H_{N}\geq n).
\end{split}
\end{equation} 

If the trap $\mathfrak{P}_{N}$ retains enough the random walk~$X$, we will have $ H_{N}\geq n$, when it starts at $y$ (always the same $y=x+\epsilon(x)\hat{e}_{i_{0}}$ of the collection $\CC(x)$). Let
\begin{displaymath}
E_N:=\bigcup^{n-1}_{j=0}\left\{X_{j}\, \text{\textit{steps outside of the trap}} \,\mathfrak{P}_{N}\right\}
\end{displaymath}
and we say ``\textit{$X_{j}$ steps outside of the trap $\mathfrak{P}_{N}$ }", when $X_{j+1}$ is on a site of the border of the trap $\mathfrak{P}_{N}$,  i.e. $X_{j+1}=y\pm\hat{e}_{i}$, $\forall i\neq i_0$, or $X_{j+1}=x$ (resp. $X_{j+1}=z\pm\hat{e}_{i}$, $\forall i\neq i_0$, or $X_{j+1}=z+\epsilon (z)\hat{e}_{i_0}$) if $X_{j}=y$ (resp. if $X_j=z$). 

The complement of $E_N$ is in fact the event that $X$ does not leave the trap during its first $n$ jumps, i.e. $X$ jumps $n$ times, starting at $y$, in turn on $z$ and $y$, which, according to the configuration of the trap, costs for each jump a probability greater than 
$$
\frac{\xi}{\xi+(2d-1)N^{-\alpha}}.
$$  
Then, we have by the Markov property
$$
P^{\omega}_{y}(H_{N}\geq n)\geq P^{\omega}_{y}(E^c_N)\geq \left(\frac{\xi}{\xi+(2d-1)N^{-\alpha}}\right)^n,
$$
and since by the choice of $N^{\alpha}\leq n<(N+1)^{\alpha}$ 
$$
\left(\frac{\xi}{\xi+(2d-1)N^{-\alpha}}\right)^n \xrightarrow[n \to +\infty]{}  e^{-(2d-1)/\xi},
$$
it follows for all~$N$ large enough that
\begin{equation}
P^{\omega}_{y}(H_{N}\geq n)\geq\frac{ e^{-(2d-1)/\xi}}{2}.
\end{equation}
So, putting this in \eqref{if}, we obtain
\begin{eqnarray*}
P^{\omega}_{0}(X_{n}\in B_{N}|D_{N}\leq N-1) 
&\geq &
\frac{e^{-(2d-1)/\xi}}{8dn}\sum^{N-1}_{k=0}\sum_{x\in B_{N-1}}\frac{P^{\omega}_{0}(D_{N}=k, X_{H_{k}}=x)}{P^{\omega}_{0}(D_{N}\leq
N-1)}
\\
&\geq&
\frac{e^{-(2d-1)/\xi}}{8d n}.
\end{eqnarray*}
Now, according to \eqref{cantelli}, we have $P^{\omega}_{0}(D_{N}\leq N-1)\geq \ffrac{1}{2}$. Then we deduce
$$
P^{\omega}_{0}(X_{n}\in B_{N})\geq P^{\omega}_{0}(X_{n}\in B_{N}|D_{N}\leq N-1)P^{\omega}_{0}(D_{N}\leq N-1)\geq \frac{e^{-(2d-1)/\xi}}{16d n}.
$$
A fortiori, we have
$$
P^{\omega}_{0}(X_{n}\in B_{n^{1/\alpha}})\geq P^{\omega}_{0}(X_{n}\in B_N)\geq \frac{e^{-(2d-1)/\xi}}{16 d n}.
$$
Thus, for all $N\geq N_{0}$, by replacing the last inequality in \eqref{minun2}, we obtain
$$
P^{2n}_{\omega}(0,0)\geq \frac{\pi(0)\left(e^{-(2d-1)/\xi}/16d\right)^{2}7^{-d}}{n^{2+\delta(\gamma)}}.
$$
where $\delta(\gamma):=d(4d-2)\gamma/(1-\epsilon)$. When we let $\epsilon\longrightarrow 0$, we get \eqref{esup}.
\end{proofsect}

\section{\textbf{Standard heat-kernel decay}}
\label{shd}
We give here the proof of Theorem~\ref{thm}.

Let us first give some definitions and fix some notations besides those seen before.

Consider a Markov chain on a countable state-space~$V$ with transition probability denoted by $\cmss P(x,y)$ and invariant measure denoted by~$\pi$. Define~$\cmss Q(x,y)=\pi(x)\cmss P(x,y)$ and for each~$S_1,S_2\subset V$, let
\begin{equation}
\label{QSS}
\cmss Q(S_1,S_2)=\sum_{x\in S_1}\sum_{y\in S_2}\cmss Q(x,y).
\end{equation}
For each~$S\subset V$ with~$\pi(S)\in(0,\infty)$ we define
\begin{equation}
\label{PhiS}
\Phi_S=\frac{\cmss Q(S,S^c)}{\pi(S)}
\end{equation}
and use it to define the isoperimetric profile
\begin{equation}
\label{Phi-inf}
\Phi(r)=\inf\bigl\{\Phi_S\colon \pi(S)\le r\bigr\}.
\end{equation}
(Here~$\pi(S)$ is the measure of~$S$.)
It is easy to check that we may restrict the infimum to sets~$S$ that are connected in the graph structure induced on~$V$ by $\cmss P$.

To prove Theorem \ref{thm}, we combine basically two facts. On the one hand, we use Theorem~2 of Morris and Peres~\cite{MP} that we summarize here~: Suppose that~$\cmss P(x,x)\ge\sigma$ for some~$\sigma\in(0,1/2]$ and all~$x\in V$. Let~$\epsilon>0$ and~$x,y\in V$. Then
\begin{equation}
\label{MP-bound}
\cmss P^n(x,y)\le\epsilon\pi(y)
\end{equation}
for all~$n$ such that
\begin{equation}
\label{LK-bound}
n\ge 1+\frac{(1-\sigma)^2}{\sigma^2}\int_{4[\pi(x)\wedge\pi(y)]}^{4/\epsilon}\frac4{u\Phi(u)^2}\,\text d u.
\end{equation}
 Let $B_{N+1}=[-(N+1),N+1]^d$ and $\BB_{N+1}$ denote the set of nearest-neighbor bonds of $B_{N+1}$, i.e., $\BB_{N+1}=\{b=(x,y): x,y\in B_{N+1}, x\sim y\}$. Call $\Z^d_e$ the set of even points of $\Z^d$, i.e., the points $x:=(x_1,\ldots,x_d)$ such that $\vert\sum^{d}_{i=1}x_i\vert=2k$, with $k\in\N$  ($0\in \N$), and equip it with the graph structure
defined by~: two points $x,y\in \Z^d_e\subset\Z^d$ are neighbors when they are separated in $\Z^d$ by two steps, i.e. 
$$
\sum^{d}_{i=1}\vert x_i-y_i\vert=2.
$$
We operate the following modification on the environment~$\omega$ by defining $\tilde{\omega}_b=1$ on every bond $b\notin\BB_{N+1}$ and $\tilde{\omega}_b=\omega_b$ otherwise. Then, we will adapt the machinery above to the following setting
\begin{equation}
V=\Z^d_e,\quad\cmss P= P^2_{\tilde{\omega}}\quad\text{and}\quad\pi=\pi_{\tilde{\omega}},
\end{equation}
with the objects in \twoeqref{QSS}{Phi-inf} denoted by~$\cmss Q_{\tilde{\omega}}$, $\Phi_S^{({\tilde{\omega}})}$ and~$\Phi_{\tilde{\omega}}(r)$. So, the random walk associated with $P^2_{\tilde{\omega}}$ moves on the even points. 

\smallskip
On the other hand, we need to know the following standard fact that gives a lower bound of the conductances of the box $B_{N}$. For a proof, see \cite{Fontes-Mathieu}, Lemma~3.6.
\begin{lemma}
\label{L}
Under assumption~\eqref{1},
\begin{equation}
\label{LL}
\lim_{N\rightarrow+\infty}\frac{\log\inf_{b\in\BB_{N}}\omega_b}{\log N}=-\frac{d}{\gamma},\qquad \Q-a.s.
\end{equation}
\end{lemma}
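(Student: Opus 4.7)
The plan is to establish the limit in \eqref{LL} by a two-sided extreme-value sandwich: to show that $\Q$-almost surely, for every $\eps>0$ and every $N$ large enough,
\[
N^{-d/\gamma-\eps}\le\inf_{b\in\BB_N}\omega_b\le N^{-d/\gamma+\eps}.
\]
Two ingredients drive everything. First, the bond count $|\BB_N|$ is of order $N^d$. Second, by the polynomial tail \eqref{1}, there exist constants $c,a_0>0$ such that $\tfrac12 a^\gamma\le\Q(\omega_b\le a)\le 2 a^\gamma$ for all $0<a<a_0$. With these in hand I would prove the two halves of the sandwich separately and then let $\eps\downarrow0$ along a countable sequence to conclude.

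For the upper half, $\limsup\le -d/\gamma$: fix $\eps>0$, set $a_N=N^{-d/\gamma+\eps}$, and estimate, using the i.i.d.\ assumption,
\[
\Q\Bigl(\inf_{b\in\BB_N}\omega_b>a_N\Bigr)=\prod_{b\in\BB_N}\Q(\omega_b>a_N)\le\bigl(1-\tfrac12 a_N^\gamma\bigr)^{|\BB_N|}\le\exp\bigl(-c'N^{\eps\gamma}\bigr),
\]
which is summable in $N$. Borel--Cantelli then gives $\inf_{b\in\BB_N}\omega_b\le N^{-d/\gamma+\eps}$ for all $N$ large, $\Q$-a.s.

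For the lower half, $\liminf\ge -d/\gamma$: a direct union bound yields only
\[
\Q\Bigl(\inf_{b\in\BB_N}\omega_b\le N^{-d/\gamma-\eps}\Bigr)\le 2\,|\BB_N|\,N^{-d-\eps\gamma}\le C\,N^{-\eps\gamma},
\]
which is \emph{not} summable in $N$. This is the single non-routine point of the argument, and the main obstacle to a one-line proof. I would sidestep it by restricting to the sub-sequence $N_k:=2^k$, along which the bound $C\,2^{-k\eps\gamma}$ does sum; Borel--Cantelli then supplies $\inf_{b\in\BB_{N_k}}\omega_b\ge N_k^{-d/\gamma-\eps}$ for all $k$ large. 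For arbitrary $N$ with $N_k\le N<N_{k+1}$, the trivial monotonicity $\BB_N\subset\BB_{N_{k+1}}$ gives
\[
\inf_{b\in\BB_N}\omega_b\ge\inf_{b\in\BB_{N_{k+1}}}\omega_b\ge N_{k+1}^{-d/\gamma-\eps}\ge (2N)^{-d/\gamma-\eps},
\]
which only shifts $\log\inf/\log N$ by a constant that vanishes as $N\to\infty$.

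Combining the two halves and letting $\eps\downarrow0$ along a countable sequence yields \eqref{LL}. Beyond the dyadic-subsequence trick in the lower bound, every step reduces to a standard extreme-value computation for i.i.d.\ samples with Weibull-type tail at zero, so no further difficulty is anticipated.
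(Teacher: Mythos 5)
The paper itself does not prove this lemma---it defers the proof to Fontes and Mathieu~\cite{Fontes-Mathieu}, Lemma~3.6---so there is no in-paper argument to compare your proposal against. Your self-contained proof is correct: the polynomial tail in \eqref{1} gives $\tfrac12 a^\gamma\le\Q(\omega_b\le a)\le 2a^\gamma$ for small $a$, the upper half ($\limsup\le -d/\gamma$) follows from the product bound $\prod_{b\in\BB_N}\Q(\omega_b>a_N)\le\exp(-c\,|\BB_N|\,a_N^\gamma)$ which is summable when $a_N=N^{-d/\gamma+\eps}$ since $|\BB_N|\asymp N^d$, and the lower half ($\liminf\ge -d/\gamma$) is correctly handled via the dyadic subsequence $N_k=2^k$ together with the monotonicity $\inf_{\BB_N}\omega_b\ge\inf_{\BB_{N_{k+1}}}\omega_b$, since the naive union bound $\Q(\inf_{\BB_N}\omega_b\le N^{-d/\gamma-\eps})\le CN^{-\eps\gamma}$ is indeed not summable in $N$. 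Finally the $2^{-d/\gamma-\eps}$ prefactor picked up on the off-dyadic scales vanishes under $\log(\cdot)/\log N$, and sending $\eps\downarrow 0$ along a countable sequence closes the argument. This is exactly the kind of extreme-value computation underlying the cited Fontes--Mathieu lemma, and the dyadic-subsequence step is the only place where some care is genuinely required; you have identified it and handled it properly.
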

Thus, for arbitrary $\mu>0$, we can write $\Q-$a.s.,  for all $N$ large enough
\begin{equation}
\label{mu}
\inf_{b\in\BB_{N+1}}\omega_b\geq N^{-(\frac{d}{\gamma}+\mu)}.
\end{equation}

%
%

Our next step involves extraction of appropriate bounds on surface and volume terms. 
\begin{lemma}
\label{lemma-adapt}
Let~$d\ge2$ and set $\alpha(N):=N^{-(\frac{d}{\gamma}+\mu)}$, for arbitrary $\mu>0$. Then,  for a.e. $\omega$, there exists a constant~$c>0$  such that  the following holds: For $N$ large enough and any finite connected~$\Lambda\subset \Z^d_e$,
we have
\begin{equation}
\label{Q-actual}
\cmss Q_{\tilde{\omega}}(\Lambda,\Z^d_e\setminus\Lambda)\ge
c \alpha(N)^2\pi_{\tilde{\omega}}(\Lambda)^{\frac{d-1}d}.
\end{equation}
\end{lemma}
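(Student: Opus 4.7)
The strategy reduces the estimate to two clean ingredients: the classical edge isoperimetric inequality on $\Z^d_e$, and a pointwise lower bound of order $\alpha(N)^2$ on the two-step weights $\cmss Q_{\tilde\omega}(x,y)$ coming from \eqref{mu}. The plan is first to control the number of boundary edges of $\Lambda$ in the graph $\Z^d_e$, then to replace each such edge by a quantitative contribution to $\cmss Q_{\tilde\omega}(\Lambda,\Z^d_e\setminus\Lambda)$, and finally to convert cardinality into $\pi_{\tilde\omega}$-measure.

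First I would invoke the classical isoperimetric inequality on $\Z^d_e$: since $\Z^d_e$ is quasi-isometric to $\Z^d$, there exists $c_1=c_1(d)>0$ such that every finite $\Lambda\subset\Z^d_e$ satisfies
$$
|\partial^{(e)}_E\Lambda|\ge c_1 |\Lambda|^{(d-1)/d},
$$
where $\partial^{(e)}_E\Lambda$ denotes the set of edges of $\Z^d_e$ with one endpoint in $\Lambda$ and the other in $\Z^d_e\setminus\Lambda$. This is a standard Loomis--Whitney-type fact. Second, I would bound $\cmss Q_{\tilde\omega}(x,y)$ uniformly from below for every $\Z^d_e$-edge $\{x,y\}$. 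Unfolding the definition,
$$
\cmss Q_{\tilde\omega}(x,y)=\pi_{\tilde\omega}(x)P^2_{\tilde\omega}(x,y)=\sum_{z}\frac{\tilde\omega_{xz}\tilde\omega_{zy}}{\pi_{\tilde\omega}(z)},
$$
the sum running over odd sites $z$ with $z\sim x$ and $z\sim y$ in $\Z^d$; at least one such $z$ exists whenever $x,y$ are neighbors in $\Z^d_e$. Since $\tilde\omega_b=1$ off $\BB_{N+1}$ and $\tilde\omega_b=\omega_b\ge\alpha(N)$ on $\BB_{N+1}$ for all $N$ large enough by \eqref{mu}, and since $\pi_{\tilde\omega}(z)\le 2d$ by \eqref{1}, this yields
$$
\cmss Q_{\tilde\omega}(x,y)\ge \frac{\alpha(N)^2}{2d},\qquad \Q\text{-a.s. for }N\text{ large}.
$$

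Combining the two bounds by summing over boundary edges gives
$$
\cmss Q_{\tilde\omega}(\Lambda,\Z^d_e\setminus\Lambda)\ge \frac{c_1\,\alpha(N)^2}{2d}\,|\Lambda|^{(d-1)/d},
$$
and the trivial estimate $\pi_{\tilde\omega}(\Lambda)\le 2d|\Lambda|$ then converts $|\Lambda|^{(d-1)/d}$ into a constant multiple of $\pi_{\tilde\omega}(\Lambda)^{(d-1)/d}$, yielding \eqref{Q-actual} with $c=c_1(2d)^{-(2d-1)/d}$. I do not foresee a serious obstacle: the only point requiring a touch of care is that \eqref{mu} must apply to every bond participating in a two-step passage between even boundary points of $\Lambda$, which is automatic because any bond in $\BB_{N+1}$ already lies inside $B_{N+1}$, and any bond outside carries $\tilde\omega_b=1\ge\alpha(N)$. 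The isoperimetric inequality on $\Z^d_e$ may be taken from the literature or proved in a few lines by projecting each $\Z^d_e$-connected component onto coordinate hyperplanes; once that step is granted, the rest is straightforward bookkeeping.
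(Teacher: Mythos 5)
Your proposal is correct and takes essentially the same route as the paper: lower-bound each $\cmss Q_{\tilde\omega}(x,y)$ for a $\Z^d_e$-boundary edge by $\alpha(N)^2/(2d)$ via the two-step decomposition and the conductance lower bound from Lemma~\ref{L}, invoke the isoperimetric inequality on $\Z^d_e$, and convert cardinality to $\pi_{\tilde\omega}$-mass using $\pi_{\tilde\omega}(x)\le 2d$. The only cosmetic difference is that you unfold $\cmss Q_{\tilde\omega}(x,y)$ as a sum over intermediate odd sites $z$ and keep one term, while the paper writes the same one-term lower bound directly.
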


The proof of lemma \ref{lemma-adapt} will be a consequence of the following well-known fact of isoperimetric inequalities on $\Z^d$ (see \cite{Woess}, Chapter I, \S~4). For any connected~$\Lambda\subset\Z^d$, let~$\partial\Lambda$ denote the set of edges between~$\Lambda$ and~$\Z^d\setminus\Lambda$. Then, there exists a constant $\kappa$ such that
\begin{equation}
\label{ii}
|\partial\Lambda|\ge \kappa|\Lambda|^{\frac{d-1}{d}}
\end{equation}
for every finite connected $\Lambda\subset\Z^d$. This remains true for $\Z^d_e$.  

\begin{proofsect}{Proof of lemma~\ref{lemma-adapt}}
For some arbitrary $\mu>0$, set $\alpha:=\alpha(N)=N^{-(\frac{d}{\gamma}+\mu)}$ and let~$N\gg1$. For any finite connected~$\Lambda\subset \Z^d_e$, we claim that
\begin{equation}
\label{Q-bd}
{\cmss Q}_{\tilde{\omega}}(\Lambda,\Z^d_e\setminus\Lambda)\ge \frac{\alpha^2}{2d}\,| \partial\Lambda|
\end{equation}
and
\begin{equation}
\label{vol-bd}
\pi_{\tilde{\omega}}(\Lambda)\le 2d|\Lambda|.
\end{equation}
Then, Lemma~\ref{L} gives a.s. $\inf_{b\in \BB_N}\omega(b)>\alpha$ and  by virtue of \eqref{ii}, we have $|\partial\Lambda|\ge \kappa|\Lambda|^{\frac{d-1}{d}}$, then~\eqref{Q-actual} will follow from \twoeqref{Q-bd}{vol-bd}.

It remains to prove \twoeqref{Q-bd}{vol-bd}. The bound \eqref{vol-bd} is implied by~$\pi_{\tilde{\omega}}(x)\le2d$. For \eqref{Q-bd}, since~$P^2_\omega$ represents two steps of a random walk, we get a lower bound on~$\cmss Q_\omega(\Lambda,\Z^d_e\setminus\Lambda)$ by picking a site~$x\in\Lambda$ which has a neighbor~$y\in\Z^d$ that has a neighbor~$z\in\Z^d_e$ on the outer boundary of~$\Lambda$. By Lemma~\ref{L}, if $x$ or $z\in B_{N+1}$,    
the relevant contribution is bounded by
\begin{equation}
\label{aa}
\pi_{\tilde{\omega}}(x) P^2_{\tilde{\omega}}(x,z)\ge\pi_{\tilde{\omega}}(x)\frac{\tilde{\omega}_{xy}}{\pi_{\tilde{\omega}}(x)}\frac{\tilde{\omega}_{yz}}{\pi_{\tilde{\omega}}(y)}\ge\frac{\alpha^2}{2d}.
\end{equation}
For the case where $x,z\notin\Z^d_e\cap B_{N+1}$, clearly the left-hand side of \eqref{aa} is bounded by $1/(2d)>\alpha^{2}/(2d)$.
Once~$\Lambda$ has at least two elements, we can do this for~$(y,z)$ ranging over all bonds in~$\partial\Lambda$, so summing over $(y,z)$ we get~\eqref{Q-bd}.
\end{proofsect}

Now we get what we need to estimate the decay of $P^{2n}_\omega(0,0)$.

\begin{proofsect}{Proof of Theorem~\ref{thm}}
Let $d\geq5$,  $\gamma>8d$ and choose $\mu>0$ such that
$$
\mu<\frac{1}{8}-\frac{d}{\gamma}.
$$
Let $n=\lfloor N/2\rfloor$, $N\gg1$, and consider the random walk on $\tilde{\omega}$.

We will derive a bound on~$\Phi_\Lambda^{({\tilde{\omega}})}$ for connected~$\Lambda\subset \Z^d_e$. Henceforth~$c$ denotes a generic constant.
Observe that \eqref{Q-actual} implies
\begin{equation}
\Phi_\Lambda^{({\tilde{\omega}})}\ge c\alpha^2\pi_{\tilde{\omega}}(\Lambda)^{-1/d}.
\end{equation}
Then, we conclude that
\begin{equation}
\Phi_{\tilde{\omega}}(r)\ge  c \alpha^2r^{-1/d}
\end{equation} 

The relevant integral is thus bounded by
\begin{eqnarray}
\frac{(1-\sigma)^2}{\sigma^2}\int_{4[\pi(0)\wedge \pi(x)]}^{4/\epsilon}\frac{4}{u\Phi_{\tilde{\omega}}(u)^2}\,\text d u
&\le&
 c\alpha^{-4}\sigma^{-2}\epsilon^{-2/d}
\end{eqnarray}
for some constant~$c>0$. Setting~$\epsilon$ proportional to
$n^{\frac{4d^2}{\gamma}+4\mu d-\frac{d}{2}}$, and noting \mbox{$\sigma\ge\alpha^2/(2d)$}, the right-hand side
is less than~$n$ and by setting $\delta(\gamma)=4d^2/\gamma$, we will get 
\begin{equation}
\label{bb}
P^{2n}_{\tilde{\omega}}(0,x)\leq \frac{c}{n^{\frac{d}{2}-\delta(\gamma)-4\mu d}},\qquad  \forall x\in \Z^d_e.
\end{equation}
As the random walk will not leave the box $B_N$ by time $2n$, we can replace ${\tilde{\omega}}$ by $\omega$ in \eqref{bb}, and since $P^{2n}_\omega(0,x)=0$ for each $x\notin B_N$, then after letting $\mu\rightarrow0$, we get
$$
\limsup_{n\rightarrow+\infty}\sup_{x\in\Z^d}\frac{\log P^{2n}_{\omega}(0,x)}{\log n}\leq -\frac{d}{2}+\delta(\gamma).
$$
This proves the claim for even~$n$;
for odd~$n$ we just concatenate this with a single step of the
random walk. 
\end{proofsect}

\section*{Acknowledgments}
\noindent 
I express my gratitude to my father Youcef Bey. I wish to thank my Ph.D. advisor, Pierre Mathieu for suggesting and discussions on this problem, and Abdelatif Bencherif-Madani for his support. I also would like to thank the referees for their careful reading and comments that led to an improvement of the paper.

\bigskip


\end{document}